   \def\MR#1{}
\title{
A remark on strict independence relations
}
\date{November 7, 2015; updated: March 2, 2016}
\author{
Gabriel Conant\\
University of Notre Dame\\
gconant@nd.edu
}
\begin{document}

\maketitle

\begin{abstract}\noindent
We prove that if $T$ is a complete theory with weak elimination of imaginaries, then there is an explicit bijection between strict independence relations for $T$ and strict independence relations for $T^{\eq}$. We use this observation to show that if $T$ is the theory of the \Fraisse\ limit of finite metric spaces with integer distances, then $T^{\eq}$ has more than one strict independence relation. This answers a question of Adler \cite[Question 1.7]{Adgeo}.
\end{abstract}

\section{Introduction}

Let $T$ be a complete first-order theory with a sufficiently saturated monster model $\M$. We use $A,B,C,\ldots$ to denote small subsets of $\M$, where $A$ is \emph{small} (written $A\subset\M$) if $\M$ is $|T(A)|^+$-saturated. We take the following definition from Adler \cite{Adgeo}.

\begin{definition}\label{def:SIR}
Let $\ind$ be a ternary relation on small subsets of $\M$. We define the following axioms.
\begin{enumerate}[$(i)$]
\item (\emph{invariance}) If $A\ind_C B$ and $\sigma\in\Aut(\M)$, then $\sigma(A)\ind_{\sigma(C)}\sigma(B)$. 
\item (\emph{monotonicity}) If $A\ind_C B$, $A'\seq A$, and $B'\seq B$, then $A'\ind_C B'$.
\item (\emph{base monotonicity}) If $A\ind_D B$ and $D\seq C\seq B$ then $A\ind_C B$.
\item (\emph{transitivity}) If $D\seq C\seq B$, $A\ind_D C$ and $A\ind_C B$ then $A\ind_D B$.
\item (\emph{extension}) If $A\ind_C B$ and $\hat{B}\supseteq B$ then there is $A'\equiv_{BC} A$ such that $A\ind_C \hat{B}$.
\item (\emph{finite character}) If $A_0\ind_C B$ for all finite subsets $A_0\seq A$, then $A\ind_C B$.
\item (\emph{local character}) For every $A$, there is a cardinal $\kappa(A)$ such that, for every $B$, there is $C\seq B$ with $|C|<\kappa(A)$ and $A\ind_C B$.
\item (\emph{symmetry}) If $A\ind_C B$ then $B\ind_C A$.
\item (\emph{anti-reflexivity}) $a\ind_C a$ implies $a\in\acl(C)$. 
\end{enumerate}
A ternary relation $\ind$ is a \textbf{strict independence relation} for $T$ if it satisfies $(i)$ through $(ix)$.\footnote{In \cite{Adgeo}, Adler formulates transitivity on the left, and proves symmetry as a consequence of the rest of the axioms. We are using the more standard formulation of transitivity on the right, and therefore include symmetry. Adler also includes an axiom called \emph{normality}, which is a consequence of invariance, extension, and symmetry \cite[Remark 1.2(1)]{Adgeo}.} $T$ is \textbf{rosy} if there is a strict independence relation for $T^{\eq}$.
\end{definition}
 
Rosy theories were developed by Ealy and Onshuus as a way to define a class of theories equipped with the weakest possible notion of independence, which still satisfies basic desirable properties (see, e.g., \cite{EaOn}). Specifically, if $T$ is rosy, then there is a distinguished strict independence relation for $T^{\eq}$, called \emph{thorn-forking independence}, which is canonical in the sense that it is the weakest such relation \cite{Adgeo}. This fact motivates Question 1.7 of \cite{Adgeo}, which asks whether there is a theory $T$ such that $T^{\eq}$ has more than one strict independence relation. Rephrased in the negative, the question asks if the axioms of strict independence relations \emph{characterize} thorn-forking in $T^{\eq}$ for rosy theories (analogous to the characterizations of forking in stable and simple theories due, respectively, to Harnik-Harrington \cite{HaHa} and Kim-Pillay \cite{KiPi}).

In this note, we answer Adler's question by exhibiting two distinct strict independence relations for $T^{\eq}$, where $T$ is the theory of the \emph{integral Urysohn space} (i.e. the \Fraisse\ limit of metric spaces with integer distances). To accomplish this, we first prove a general result that if $T$ is a complete theory with weak elimination of imaginaries, then there is an explicit bijection between strict independence relations for $T$ and strict independence relations for $T^{\eq}$. We then show that if $T$ is the theory of the integral Urysohn space (shown in \cite{CoDM2} to have weak elimination of imaginaries), then $T$ has more than one strict independence relation.  We conclude with further remarks on the motivation for Adler's question, as well as a generalization of our results to a certain family of theories (including that of the \emph{rational} Urysohn space).

\section{Imagining strict independence}

Let $T$ be a complete first-order theory with monster model $\M$. 

\begin{definition}\label{def:main}$~$
\begin{enumerate}
\item Fix $e\in\M^{\eq}$. A finite tuple $c\in\M$ is a \textbf{weak canonical parameter} for $e$ if $c\in\eacl(e)$ and $e\in\edcl(c)$. \item $T$ has \textbf{weak elimination of imaginaries} if every $e\in\M^{\eq}$ has a weak canonical parameter in $\M$.
\item Suppose $A\subset\M^{\eq}$. We say a subset $A_*\subset\M$ is a \textbf{weak code for $A$} if $A_*=\bigcup_{e\in A}c(e)$, where $c(e)$ is a weak canonical parameter for $e$.
\end{enumerate}
\end{definition}

Note that if $T$ has weak elimination of imaginaries then any subset of $\M^{\eq}$ has a weak code in $\M$. Therefore, if $\ind$ is a ternary relation on $\M$, we can use weak codes to define a ternary relation on $\M^{\eq}$ in the following way.

\begin{definition}
Suppose $T$ has weak elimination of imaginaries, and $\ind$ is a ternary relation on $\M$. Define the ternary relation $\ind^{\eq}$ on $\M^{\eq}$ such that, given $A,B,C\subset\M^{\eq}$,
$$
A\textstyle\ind^{\eq}_C B\miff A_*\ind_{C_*}B_*\text{ for some weak codes $A_*,B_*,C_*$ for $A,B,C$.}
$$
\end{definition}

The following basic exercise in forking calculus will allow us to replace ``for some weak codes" in the last definition with ``for all weak codes". 

\begin{exercise}\label{Adfact}
If $\ind$ is a strict independence relation for $T$, and $A,B,C\subset\M$, then $A\textstyle \ind_C B$ if and only if $\acl(A)\ind_{\acl(C)}\acl(B)$.
\end{exercise}

\begin{lemma}\label{lem:acl}
Assume $T$ has weak elimination of imaginaries, and suppose $\ind$ is a strict independence relation for $T$. Then, for any $A,B,C\subset\M^{\eq}$, 
$$
A\textstyle\ind^{\eq}_C B\miff A_*\ind_{C_*}B_*\text{ for all weak codes $A_*,B_*,C_*$ for $A,B,C$.}
$$
\end{lemma}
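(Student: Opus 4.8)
The plan is to reduce the "for all weak codes" statement to the "for some weak codes" definition by showing that the choice of weak code doesn't matter, up to interalgebraicity. The key observation is that if $c(e)$ and $c'(e)$ are two weak canonical parameters for the same imaginary $e$, then both satisfy $c(e), c'(e) \in \eacl(e)$ and $e \in \edcl(c(e)) \cap \edcl(c'(e))$, so in particular $c(e)$ and $c'(e)$ are interalgebraic over the empty set in $\M^{\eq}$. Since all the tuples involved lie in the real sort $\M$, this interalgebraicity takes place in $\acl$ (the real algebraic closure). Consequently, if $A_*$ and $A_*'$ are two weak codes for the same set $A \subset \M^{\eq}$, then $\acl(A_*) = \acl(A_*')$, and likewise for $B$ and $C$.

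The main step is therefore: given two weak codes $A_*, A_*'$ for $A$ (and similarly for $B, C$), establish that $\acl(A_*) = \acl(A_*')$. First I would verify this componentwise: for each $e \in A$, the chosen weak canonical parameters $c(e)$ and $c'(e)$ satisfy $e \in \edcl(c(e))$ and $c'(e) \in \eacl(e)$, whence $c'(e) \in \eacl(c(e))$; since $c'(e)$ is a real tuple, this gives $c'(e) \in \acl(c(e))$, and symmetrically $c(e) \in \acl(c'(e))$. Taking unions over $e \in A$ yields $\acl(A_*) = \acl(A_*')$.

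With this in hand, the equivalence follows from Exercise \ref{Adfact} applied in the real theory $T$. Suppose $A \ind^{\eq}_C B$. By definition there exist \emph{some} weak codes $A_*, B_*, C_*$ with $A_* \ind_{C_*} B_*$. By the exercise this is equivalent to $\acl(A_*) \ind_{\acl(C_*)} \acl(B_*)$. Now let $A_*', B_*', C_*'$ be \emph{any} other weak codes for $A, B, C$. By the previous paragraph, $\acl(A_*') = \acl(A_*)$, $\acl(B_*') = \acl(B_*)$, and $\acl(C_*') = \acl(C_*)$, so $\acl(A_*') \ind_{\acl(C_*')} \acl(B_*')$, and applying the exercise again gives $A_*' \ind_{C_*'} B_*'$. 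This holds for every choice of weak codes, giving the forward direction; the reverse direction is immediate since "for all" implies "for some" (weak codes always exist under weak elimination of imaginaries).

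I expect the only real subtlety to be the careful handling of the algebraic closure in $\M^{\eq}$ versus $\M$: one must confirm that because the weak canonical parameters are genuine real tuples, their interalgebraicity over $e$ descends to interalgebraicity in the real algebraic closure, so that Exercise \ref{Adfact}, which is stated for subsets of $\M$, applies directly. Beyond this bookkeeping the argument is a routine combination of the definition of weak code, basic properties of $\eacl$ and $\edcl$, and the exercise.
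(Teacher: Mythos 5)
Your proposal is correct and follows essentially the same route as the paper: both reduce the lemma to the observation that $\acl$ of a weak code is independent of the choice of weak code (you verify this componentwise via interalgebraicity of weak canonical parameters, while the paper notes directly that $\eacl(A_*)=\eacl(A)$, hence $\acl(A_*)=\eacl(A)\cap\M$), and then both conclude by applying Exercise \ref{Adfact} twice to pass from one choice of weak codes to any other.
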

\begin{proof}
First, note that if $A\subset\M^{\eq}$ and $A_*$ is a weak code for $A$, then we have $\eacl(A_*)=\eacl(A)$, and so $\acl(A_*)=\eacl(A)\cap\M$. In particular, $\acl(A_*)$ does not depend on the choice of weak code. With this observation in hand, we prove the claim. Since any subset of $\M^{\eq}$ has a weak code in $\M$, the right-to-left implication is trivial. For left-to-right, suppose $A\ind^{\eq}_C B$ and $A_*,B_*,C_*$ are weak codes for $A,B,C$. By definition, there are weak codes $A_{**},B_{**},C_{**}$ for $A,B,C$ such that $A_{**}\ind_{C_{**}}B_{**}$. By Exercise \ref{Adfact} and the above observation,
$$
\textstyle A_{**}\ind_{C_{**}}B_{**}\Rightarrow \acl(A_{**})\ind_{\acl(C_{**})}\acl(B_{**})
\textstyle\Rightarrow \acl(A_*)\ind_{\acl(C_*)}\acl(B_*)\Rightarrow A_*\ind_{C_*}B_*,
$$
as desired.
\end{proof}

\begin{theorem}\label{thm:transfer}
Suppose $T$ has weak elimination of imaginaries. Then the map $\ind\mapsto \ind^{\eq}$ is a bijection from strict independence relations for $T$ to strict independence relations for $T^{\eq}$.
\end{theorem}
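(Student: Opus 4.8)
The plan is to prove that $\ind\mapsto\ind^{\eq}$ is well-defined (its values really are strict independence relations for $T^{\eq}$), injective, and surjective. The organizing principle throughout is that, because $T^{\eq}$ eliminates imaginaries, its algebraic closure operator is $\eacl$, and $\eacl(A_*)=\eacl(A)$ for every weak code $A_*$ of $A$ (the observation opening the proof of Lemma \ref{lem:acl}). Together with Exercise \ref{Adfact} this lets me translate nearly every statement about $\ind^{\eq}$ into a statement about $\ind$ applied to the real sets $\acl(A_*)=\eacl(A)\cap\M$, where the monotonicity- and base-type axioms become transparent; Lemma \ref{lem:acl} is used constantly to pass between ``for some'' and ``for all'' weak codes.

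For well-definedness I would verify axioms $(i)$--$(ix)$ for $\ind^{\eq}$ in turn. Invariance, monotonicity, symmetry, and finite character are routine; for finite character I note that a finite subset of a weak code $A_*=\bigcup_{e\in A}c(e)$ lies inside $\bigcup_{e\in A_0}c(e)$ for some finite $A_0\seq A$, then apply monotonicity and finite character of $\ind$. The base-sensitive axioms---base monotonicity and transitivity---I would handle by rewriting everything via Exercise \ref{Adfact} in terms of $\acl(A_*)=\eacl(A)\cap\M$: the inclusions $\eacl(D)\cap\M\seq\eacl(C)\cap\M\seq\eacl(B)\cap\M$ follow from $D\seq C\seq B$, so the corresponding axiom for $\ind$ applies directly. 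For anti-reflexivity, $a\ind^{\eq}_C a$ gives $a_*\ind_{C_*}a_*$, hence $a_*\seq\acl(C_*)=\eacl(C)\cap\M$ by anti-reflexivity of $\ind$ (extended to the finite tuple $a_*$ via monotonicity), whence $a\in\edcl(a_*)\seq\eacl(C)$, which is algebraic closure in $\M^{\eq}$.

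The two delicate axioms are extension and local character. For extension, given $A\ind^{\eq}_C B$ and $\hat B\supseteq B$, I choose a weak code $\hat B_*\supseteq B_*$ and apply extension of $\ind$ to get $A'_*\equiv_{B_*C_*}A_*$ with $A'_*\ind_{C_*}\hat B_*$; an automorphism $\sigma$ witnessing $A'_*\equiv_{B_*C_*}A_*$ fixes $B_*C_*$ pointwise, hence fixes $\edcl(B_*C_*)$ pointwise, and since $B\cup C\seq\edcl(B_*C_*)$ the set $A':=\sigma(A)$ satisfies $A'\equiv^{\eq}_{BC}A$ while $\sigma(A_*)$ is a weak code for $A'$, giving $A'\ind^{\eq}_C\hat B$. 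For local character the subtlety is that the small base produced by $\ind$ lives in $\M$, not in $B$: local character of $\ind$ applied to $A_*$ and $B_*=\bigcup_{e\in B}c(e)$ yields $C'\seq B_*$ with $|C'|<\kappa(A_*)$ and $A_*\ind_{C'}B_*$; I pick for each $x\in C'$ some $e_x\in B$ with $x\in c(e_x)$ and set $C=\{e_x:x\in C'\}\seq B$, so $C'\seq C_*\seq B_*$ and base monotonicity upgrades $A_*\ind_{C'}B_*$ to $A_*\ind_{C_*}B_*$, i.e. $A\ind^{\eq}_C B$, with $|C|\le|C'|<\kappa(A_*)$.

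Injectivity is immediate: each real element is its own weak canonical parameter, so a real set is a weak code for itself, and Lemma \ref{lem:acl} gives $A\ind^{\eq}_C B\miff A\ind_C B$ for $A,B,C\seq\M$; thus $\ind^{\eq}$ determines $\ind$. For surjectivity, given a strict independence relation $\ind'$ for $T^{\eq}$, I define $\ind$ to be its restriction to real subsets and verify $(i)$--$(ix)$ for $\ind$, each transferring from $\ind'$ because automorphisms of $\M^{\eq}$ restrict to $\M$, a subset of a real set is real, and $\eacl(C)\cap\M=\acl(C)$ for real $C$. It then remains to show $\ind^{\eq}=\ind'$: applying Exercise \ref{Adfact} to the theory $T^{\eq}$ and the relation $\ind'$ (whose algebraic closure operator is $\eacl$) gives $A\ind'_C B\miff\eacl(A)\ind'_{\eacl(C)}\eacl(B)$, and since $\eacl(A_*)=\eacl(A)$ this coincides with $A_*\ind'_{C_*}B_*$, which by the definition of $\ind$ is exactly $A\ind^{\eq}_C B$. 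The main obstacle is this round-trip computation for surjectivity, together with the extension and local-character verifications above; all the remaining axioms reduce cleanly to $\ind$ on algebraic closures via Exercise \ref{Adfact}.
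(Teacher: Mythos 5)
Your proposal is correct and takes essentially the same route as the paper's proof: the same reliance on Lemma \ref{lem:acl}, the same treatment of the three delicate axioms (anti-reflexivity via $a\in\edcl(a_*)$; extension via an automorphism fixing $B_*C_*$ and hence $BC$ because $e\in\edcl(c(e))$; local character via choosing imaginaries $e_x$ whose canonical parameters cover the small real base and then applying base monotonicity), and the same injectivity/surjectivity arguments (real sets are their own weak codes; restrict to $\M$ and use Exercise \ref{Adfact} for the round trip). The only cosmetic difference is that for transitivity and base monotonicity you translate through Exercise \ref{Adfact} to $\acl$'s of weak codes, i.e.\ to the sets $\eacl(\cdot)\cap\M$, whereas the paper simply chooses nested weak codes $D_*\seq C_*\seq B_*$; both are routine.
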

\begin{proof}
We first show that the map is well-defined, i.e., if $\ind$ is a strict independence relation for $T$ then, in $\M^{\eq}$, $\ind^{\eq}$ satisfies the axioms of Definition \ref{def:SIR}. Invariance, monotonicity, base monotonicity, symmetry, transitivity, and finite character, are all straightforward, but rely on Lemma \ref{lem:acl}. To illustrate this, we show transitivity.

\emph{Transitivity}: Suppose $A,B,C,D\subset\M^{\eq}$, with $D\seq C\seq B$, $A\ind^{\eq}_D C$, and $A\ind^{\eq}_C B$. We want to show $A\ind^{\eq}_D B$. We may find weak codes $A_*,B_*,C_*,D_*$ for $A,B,C,D$ such that $D_*\seq C_*\seq B_*$. By Lemma \ref{lem:acl}, we have $A_*\ind_{D_*}C_*$ and $A_*\ind_{C_*}B_*$. By transitivity for $\ind$, we have $A_*\ind_{D_*}B_*$, and so $A\ind^{\eq}_D B$.

The rest of the axioms (\emph{anti-reflexivity}, \emph{local character}, and \emph{extension}) require more than just Lemma \ref{lem:acl}.

\emph{Anti-reflexivity}: Fix $a\in\M^{\eq}$ and $C\subset\M^{\eq}$, with $a\ind^{\eq}_C a$. Let $a_*$ and $C_*$ be weak codes for $a$ and $C$ (in particular, $a_*$ is a weak canonical parameter for $a$). We have $a_*\ind_{C_*} a_*$ by Lemma \ref{lem:acl}, and so $a_*\in\acl(C_*)$ by anti-reflexivity for $\ind$. Then $a\in\eacl(a_*)\seq\eacl(C_*)=\eacl(C)$.

\emph{Local character}: Fix $A\subset\M^{\eq}$, and let $A_*$ be a weak code for $A$. By local character for $\ind$, there is a $\kappa$ such that, for all $B_0\subset\M$, there is $C_0\seq B_0$, with $|C_0|<\kappa$ and $A_*\ind_{C_0}B_0$. Suppose $B\subset\M^{\eq}$, and let $B_*$ be a weak code for $B$. Fix $C_0\seq B_*$ such that $|C_0|<\kappa$ and $A_*\ind_{C_0}B_*$. By assumption, we may write $B_*=\bigcup_{e\in B}c(e)$, where $c(e)$ is a fixed weak canonical parameter for $e$. Given $d\in C_0$, we fix $e_d\in B$ such that $d\in c(e_d)$. Let $C=\{e_d:d\in C_0\}$ and $C_*=\bigcup_{d\in C_0}c(e_d)$. Then $|C|<\kappa$, $C_*$ is a weak code for $C$, and $C_0\seq C_*\seq B_*$. By base monotonicity for $\ind$, we have $A_*\ind_{C_*}B_*$, and so $A\ind^{\eq}_C B$.

\emph{Extension}: Fix $A,B,C\subset\M^{\eq}$, with $A\ind^{\eq}_C B$, and suppose $B\seq\hat{B}$. Let $A_*,B_*,C_*$ be weak codes for $A,B,C$, with $A_*\ind_{C_*}B_*$. Then we may find a weak code $\hat{B}_*$ for $\hat{B}$, with $B_*\seq\hat{B_*}$. By extension for $\ind$, there is $A'_*\equiv_{B_*C_*}A_*$ such that $A'_*\ind_{C_*}\hat{B}_*$. Fix $\sigma\in\Aut(\M/B_*C_*)$ such that $\sigma(A_*)=A'_*$. Let $A'=\sigma(A)$. Then $A'_*$ is a weak code for $A'$, and so we have $A'\ind^{\eq}_C \hat{B}$. It remains to show that $A'\equiv_{BC} A$ and, for this, it suffices to see that $\sigma$ fixes $BC$ pointwise. Let $B_*=\bigcup_{e\in B}c(e)$. Then $\sigma(c(e))=c(e)$ for all $e\in B$ and so, since $e\in\edcl(c(e))$, it follows that $\sigma(e)=e$. Similarly, $\sigma$ also fixes $C$ pointwise.

Finally, we must show that $\ind\mapsto\ind^{\eq}$ is a bijection. For injectivity, simply use the fact any subset $A\subset\M$ is a weak code for itself. For surjectivity, suppose $\ind$ is a strict independence relation for $T^{\eq}$. Let $\ind^*$ be the restriction to subsets of $\M$. Then one easily checks that $\ind^*$ is a strict independence relation for $T$. Using Exercise \ref{Adfact}, we have $\ind^{*,\eq}=\ind$.
\end{proof}

Recall that $T$ is \emph{real rosy} if there is a strict independence relation for $T$. The previous theorem immediately implies the following well-known fact.

\begin{corollary}\label{cor1}
If $T$ is real rosy with weak elimination of imaginaries, then $T$ is rosy.
\end{corollary}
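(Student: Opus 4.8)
The plan is to simply unwind the two definitions and invoke Theorem \ref{thm:transfer}; there is no genuine work left to do. Recall that $T$ is \emph{real rosy} precisely when the collection of strict independence relations for $T$ is nonempty, whereas $T$ is \emph{rosy} precisely when the collection of strict independence relations for $T^{\eq}$ is nonempty. So the entire content of the corollary is the assertion that, under weak elimination of imaginaries, the former set being nonempty forces the latter to be nonempty.

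First I would fix a strict independence relation $\ind$ for $T$, which exists by the hypothesis that $T$ is real rosy. Since $T$ has weak elimination of imaginaries, Theorem \ref{thm:transfer} applies and, in particular, guarantees that the map $\ind\mapsto\ind^{\eq}$ is well-defined, i.e.\ that $\ind^{\eq}$ is a strict independence relation for $T^{\eq}$. Hence the collection of strict independence relations for $T^{\eq}$ is nonempty, and so $T$ is rosy by definition. Note that only the well-definedness half of Theorem \ref{thm:transfer} is needed here; the full bijectivity (in particular surjectivity and injectivity) is considerably more than the corollary requires.

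I expect no real obstacle, since all the substantive verification — that $\ind^{\eq}$ satisfies axioms $(i)$ through $(ix)$ in $\M^{\eq}$ — has already been carried out in the proof of Theorem \ref{thm:transfer}. The only point demanding any care is the purely definitional bookkeeping of matching \emph{rosy} and \emph{real rosy} to the target and source, respectively, of the map $\ind\mapsto\ind^{\eq}$, so that producing a single element of the source yields a single element of the target.
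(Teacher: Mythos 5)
Your proof is correct and matches the paper's approach exactly: the paper states that Corollary \ref{cor1} follows immediately from Theorem \ref{thm:transfer}, and your unwinding of the definitions of \emph{real rosy} and \emph{rosy}, combined with the well-definedness of the map $\ind\mapsto\ind^{\eq}$, is precisely that argument spelled out. Your observation that only the well-definedness half of the theorem is needed (not the full bijectivity) is also accurate.
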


This result is shown explicitly by Ealy and Goldbring in \cite{EaGo} (in the context of continuous logic). The proof of Theorem \ref{thm:transfer} is similar to their work. 

Define \emph{algebraic independence}, denoted $\ind^a$, in $\M$ by: $A\ind^a_C B$ if and only if $\acl(AC)\cap\acl(BC)=\acl(C)$. For the rest of the paper, our focus will be on theories in which algebraic independence coincides with thorn-forking independence, denoted $\ind^{\thrn}$. Therefore, we omit the definition of thorn-forking and refer the reader to \cite{Adgeo}. The next fact is a standard result.

\begin{fact}\label{fact:alg}
The following are equivalent.
\begin{enumerate}[$(i)$]
\item $\ind^a$ coincides with $\ind^{\thrn}$ in $\M$ (resp. in $\M^{\eq}$).
\item $\ind^a$ satisfies base monotonicity in $\M$ (resp. in $\M^{\eq}$).
\item $\ind^a$ is a strict independence relation for $T$ (resp. for $T^{\eq}$).
\end{enumerate}
\end{fact}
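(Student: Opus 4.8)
The plan is to isolate base monotonicity as the only axiom from Definition~\ref{def:SIR} that $\ind^a$ can ever fail, and then to leverage the two defining features of thorn-forking: that it is the weakest strict independence relation, and that it is, by construction, base monotone and stronger than algebraic independence. Since the statement concerns an arbitrary complete theory and $T^{\eq}$ is again such a theory (with $\ind^a$ computed there using $\eacl$), the two parenthetical versions, in $\M$ and in $\M^{\eq}$, have identical proofs, so I would argue once for a general monster $\M$. The first step is the standard observation that $\ind^a$ satisfies every axiom except possibly base monotonicity: invariance, finite character, and local character transfer directly from the corresponding properties of $\acl$; symmetry and anti-reflexivity are read off the defining equation $\acl(AC)\cap\acl(BC)=\acl(C)$; monotonicity and transitivity are brief set-theoretic computations with algebraic closures; and extension is the one mildly nontrivial point, following from a standard saturation argument (which I would cite to \cite{Adgeo}). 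Granting this, $(ii)\Leftrightarrow(iii)$ is immediate, since $(iii)\Rightarrow(ii)$ holds because base monotonicity is among the axioms, and $(ii)\Rightarrow(iii)$ holds because base monotonicity is then the last axiom left to check.

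To connect $(i)$ with $(ii)$ and $(iii)$ I would invoke two standard properties of thorn-forking. For $(iii)\Rightarrow(i)$: if $\ind^a$ is a strict independence relation, then since $\ind^{\thrn}$ is the weakest such relation we obtain $A\ind^a_C B\Rightarrow A\ind^{\thrn}_C B$; and since thorn-forking independence always implies algebraic independence we obtain the reverse, $A\ind^{\thrn}_C B\Rightarrow A\ind^a_C B$; hence $\ind^a=\ind^{\thrn}$, which is $(i)$. For $(i)\Rightarrow(ii)$: thorn-forking is base monotone by construction, so $\ind^a=\ind^{\thrn}$ forces $\ind^a$ to be base monotone as well. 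Combined with $(ii)\Leftrightarrow(iii)$, this yields the full equivalence.

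The main obstacle is not the logical skeleton, which is just a short cycle of implications, but rather deciding precisely which facts about thorn-forking may be quoted, given that its definition has been deliberately suppressed in this note. The three inputs I rely on are that $\ind^{\thrn}$ always implies $\ind^a$, that $\ind^{\thrn}$ is always base monotone, and that it is the weakest strict independence relation; I expect each to be citable directly from \cite{Adgeo}. Consequently the genuine mathematical content of the argument is concentrated in the single observation that $\ind^a$ fails at most base monotonicity, with the extension axiom the only part of that observation demanding real (if routine) work.
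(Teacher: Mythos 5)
Your proposal is correct and matches the paper's proof in substance: the paper establishes exactly the same cycle of implications by citing Adler's Lemma~4.2 for $(i)\Rightarrow(ii)$ (thorn-forking is base monotone), Proposition~1.5 for $(ii)\Rightarrow(iii)$ ($\ind^a$ satisfies every axiom except possibly base monotonicity), and Theorem~4.3 for $(iii)\Rightarrow(i)$ (thorn-forking is the weakest strict independence relation, combined with $\ind^{\thrn}\Rightarrow\ind^a$). You have simply unpacked the content behind the same three citations, so your argument is the paper's argument with the details filled in.
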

\begin{proof}
See Lemma 4.2, Proposition 1.5, and Theorem 4.3 of \cite{Adgeo} for, respectively, $(i)\Rightarrow(ii)$, $(ii)\Rightarrow(iii)$, and $(iii)\Rightarrow(i)$.\end{proof}

\begin{corollary}\label{cor2}
Suppose $T$ has weak elimination of imaginaries. If $A,B,C\subset\M$ then $A\ind^a_C B$ in $\M$ if and only if $A\ind^a_C B$ in $\M^{\eq}$. Moreover, if $\ind^\thrn=\ind^a$ in $\M$ then $\ind^\thrn=\ind^a$ in $\M^{\eq}$.
\end{corollary}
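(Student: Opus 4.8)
The plan is to reduce everything to two facts about weak elimination of imaginaries. First, for any real set $X\subset\M$ we have $\eacl(X)\cap\M=\acl(X)$, which holds in every theory. Second, and this is where weak elimination of imaginaries enters, for real $A,C\subset\M$ every $e\in\eacl(AC)$ admits a weak canonical parameter $c$; since $c\in\eacl(e)\subseteq\eacl(AC)\cap\M=\acl(AC)$ while $e\in\edcl(c)$, every imaginary algebraic over a real set is interalgebraic with a real tuple lying in the real algebraic closure. I would record these at the outset, referring to the computation $\eacl(A_*)=\eacl(A)$ already used in the proof of Lemma~\ref{lem:acl}.

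For the first assertion I would prove the two implications separately. The direction ``$A\ind^a_C B$ in $\M^{\eq}$ implies $A\ind^a_C B$ in $\M$'' is immediate: intersect the equality $\eacl(AC)\cap\eacl(BC)=\eacl(C)$ with $\M$ and apply $\eacl(X)\cap\M=\acl(X)$ to each closure to get $\acl(AC)\cap\acl(BC)=\acl(C)$. The converse is the substantive direction. Assuming $\acl(AC)\cap\acl(BC)=\acl(C)$, I take any $e\in\eacl(AC)\cap\eacl(BC)$ and pass to a weak canonical parameter $c$ for $e$. By the second fact above, $c\in\acl(AC)$ and $c\in\acl(BC)$, so $c\in\acl(C)$; since $e\in\edcl(c)\subseteq\eacl(C)$, this shows $\eacl(AC)\cap\eacl(BC)\subseteq\eacl(C)$, and the reverse inclusion is trivial.

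For the ``moreover'' clause I would route through Fact~\ref{fact:alg} and Theorem~\ref{thm:transfer}. Given $\ind^\thrn=\ind^a$ in $\M$, Fact~\ref{fact:alg} $(i)\Rightarrow(iii)$ shows $\ind^a$ is a strict independence relation for $T$, so by Theorem~\ref{thm:transfer} the relation $(\ind^a)^{\eq}$ is a strict independence relation for $T^{\eq}$. I would then check that $(\ind^a)^{\eq}$ is literally $\ind^a$ computed in $\M^{\eq}$: for weak codes $A_*,B_*,C_*$ of $A,B,C\subset\M^{\eq}$, the first part of the corollary turns $A_*\ind^a_{C_*}B_*$ into $\eacl(A_*C_*)\cap\eacl(B_*C_*)=\eacl(C_*)$, and since a weak code satisfies $\eacl(A_*)=\eacl(A)$ this is exactly $\eacl(AC)\cap\eacl(BC)=\eacl(C)$, i.e.\ $A\ind^a_C B$ in $\M^{\eq}$. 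Hence $\ind^a$ is a strict independence relation for $T^{\eq}$, and Fact~\ref{fact:alg} $(iii)\Rightarrow(i)$ applied in $\M^{\eq}$ gives $\ind^\thrn=\ind^a$ there.

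The main obstacle is the forward direction of the first assertion: one must control arbitrary imaginaries in the intersection $\eacl(AC)\cap\eacl(BC)$, and there is no reason a priori for such an imaginary to descend to the real algebraic closures. Weak elimination of imaginaries is exactly what licenses replacing $e$ by a real weak canonical parameter while preserving both ``algebraic over $AC$'' and ``algebraic over $BC$'', after which the real hypothesis closes the argument. Everything else is bookkeeping with $\eacl$--$\acl$ intersections together with the already-established transfer theorem.
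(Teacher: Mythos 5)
Your proposal is correct and follows essentially the same route as the paper: the substantive content---replacing an imaginary $e\in\eacl(AC)\cap\eacl(BC)$ by a weak canonical parameter to pull the intersection down to the real algebraic closures, plus the identity $\acl(X)=\eacl(X)\cap\M$, and then combining Fact~\ref{fact:alg} with Theorem~\ref{thm:transfer} for the ``moreover'' clause---is exactly what the paper's proof does, with your explicit computation simply filling in the step the paper labels as ``routine.'' The only (immaterial) organizational difference is that you prove the real/imaginary equivalence for $\ind^a$ directly and then deduce that the lifted relation $\ind^{a,\eq}$ coincides with $\ind^a$ in $\M^{\eq}$, whereas the paper establishes that coincidence first and reads off the first claim from it via Lemma~\ref{lem:acl} and the fact that real sets are weak codes for themselves.
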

\begin{proof}
Let $\ind^{a,\eq}$ denote the ternary relation in $\M^{\eq}$ obtained by applying the $\eq$-map to $\ind^a$ in $\M$. Recall that, for any $A\subset\M^{\eq}$ and weak code $A_*\subset\M$, we have $\acl(A_*)=\eacl(A)\cap\M$. Using this, it is routine to show that $\ind^{a,\eq}$ coincides with $\ind^a$ in $\M^{\eq}$. The first claim then follows from Lemma \ref{lem:acl}, and the fact that any subset of $\M$ is a weak code for itself. For the second claim, combine Theorem \ref{thm:transfer} and Fact \ref{fact:alg}.
\end{proof}

It is worth noting that Theorem \ref{thm:transfer} becomes false if the assumption of weak elimination of imaginaries is removed. In particular, if $T$ is stable then forking and thorn-forking coincide in $\M^{\eq}$ \cite{EaOn}, and so thorn-forking is the unique strict independence relation for $T^{\eq}$ \cite{Adgeo}. However, there are stable theories (failing weak elimination of imaginaries) for which $T$ has more than one strict independence relation. In fact, for any cardinal $\kappa$, if $T$ is the model completion of the theory of $\kappa$ many equivalence relations, then $T$ is stable and has at least $2^\kappa$ distinct strict independence relations (see \cite[Example 1.5]{Adgeo}).

\section{Integer distance metric spaces}

Let $\cN$ denote the ordered monoid $(\N,+,\leq,0)$. Then the class $\cK_\cN$ of finite metric spaces, with integer distances, is a \Fraisse\ class in the relational language $\cL_\N=\{d_n(x,y):n\in\N\}$, where $d_n(x,y)$ is interpreted as $d(x,y)\leq n$. In particular, $\cK_\cN$ is closed under free amalgamation of metric spaces. Precisely, given integer distance metric spaces $A,B,C$, with $\emptyset\neq C\seq A\cap B$, the \emph{free amalgamation of $A$ and $B$ over $C$} is defined by setting, for $a\in A$ and $b\in B$, $d(a,b)=d_{\max}(a,b/C):=\inf_{c\in C}(d(a,c)+d(c,b))$. 

Let $\cU_\cN$ denote the \Fraisse\ limit of $\cK_\cN$, which we refer to as the \emph{integral Urysohn space}. Then $\cU_\cN$ is the unique (up to isometry) countable, universal, and ultrahomogeneous metric space with integer distances. Let $T_\cN=\Th(\cU_\cN)$, and let $\U_{\cN}$ be a sufficiently saturated monster model of $T_\cN$.

Note that $\U_\cN$ cannot be interpreted as an integer-valued metric space in a way coherent with $T_\cN$. In particular, the type $\{\neg d_n(x,y):n\in\N\}$ is consistent with $T_\cN$, and therefore realized in $\U_\cN$ by points of ``infinite distance". However, this is the only obstruction to viewing $\U_\cN$ as a metric space, and we resolve the issue as follows. Let $\cN^*=(\N\cup\{\infty\},+,\leq,0)$ be an ordered monoid extension of $\cN$, where $\infty+\infty=\infty$ and, for all $n\in\N$, $n<\infty$ and $n+\infty=\infty=\infty+n$. Then $\U_\cN$ can be viewed as an $\cN^*$-valued metric space. We use $d$ to refer to the $\cN^*$-metric on $\U_\cN$. Given $C\subset\U_\cN$ and $a\in\U_\cN$, let $d(a,C)=\inf\{d(a,c):c\in C\}$. Then $C=\emptyset$ implies $d(a,C)=\infty$, and if $C$ is nonempty then there is some $c\in C$ such that $d(a,C)=d(a,c)$. In particular, $d(a,C)=0$ if and only if $a\in C$. For the subsequent work, we will need the following facts about $T_\cN$. 

\begin{fact}\label{thm:UN}
$~$
\begin{enumerate}[$(a)$]
\item $T_\cN$ has quantifier elimination. Consequently, $\acl(C)=C$ for all $C\subset\U_\cN$. Moreover, $\U_\cN$ is a $\kappa^+$-universal and $\kappa$-homogeneous $\cN^*$-metric space, where $\kappa$ is the saturation cardinal of $\U_\cN$.
\item $T_\cN$ has weak elimination of imaginaries.
\end{enumerate}
\end{fact}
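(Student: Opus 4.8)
The plan is to handle the two parts separately: part $(a)$ is a routine application of \Fraisse\ theory, while the substantive content lies in part $(b)$. For the quantifier elimination in $(a)$, I would first use that the \Fraisse\ limit $\cU_\cN$ is ultrahomogeneous; since $\cL_\N$ is relational, finitely generated substructures are simply finite subsets, so ultrahomogeneity says that every isometry between finite subsets of $\cU_\cN$ extends to an automorphism. The same holds in the monster $\U_\cN$: being saturated with age $\cK_\cN$, and $\cK_\cN$ having the amalgamation property, $\U_\cN$ is ultrahomogeneous for finite subsets. Consequently any two tuples with the same quantifier-free type are conjugate under $\Aut(\U_\cN)$ and hence have the same complete type, which is exactly quantifier elimination.

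The remaining assertions of $(a)$ follow from QE together with saturation. The monster is $\kappa$-homogeneous because, by QE, any partial isometry between subsets of size $<\kappa$ is elementary and hence extends to an automorphism of the saturated model $\U_\cN$; and it is $\kappa^+$-universal because any $\cN^*$-metric space of size $\le\kappa$ may be embedded point by point, each new point realizing the type that prescribes its distances to the image constructed so far. The one subtlety is the consistency of such a distance type: a finite fragment constrains only finitely many of the relations $d_n$, and it is realized by a genuine member of $\cK_\cN$ obtained by replacing each infinite distance by a sufficiently large integer, so the type is finitely satisfiable and hence realized in $\U_\cN$. Finally, for $\acl(C)=C$, given $a\notin C$ (the case $C=\emptyset$ being trivial) I would form the free amalgamation over $C$ of countably many copies $C\cup\{a_i\}$ of $C\cup\{a\}$; since $\cK_\cN$ is closed under free amalgamation this is a valid $\cN^*$-metric space, in which each $a_i$ has the same distances to $C$ as $a$ (and $d(a_i,a_j)=2\,d(a,C)$). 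Embedding it over $C$ by universality and homogeneity yields infinitely many realizations of the quantifier-free---hence, by QE, complete---type of $a$ over $C$, so $a\notin\acl(C)$.

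Part $(b)$, weak elimination of imaginaries, is the main obstacle, and is precisely the result established in \cite{CoDM2}; I would cite it rather than reprove it. The strategy behind such a statement is to use QE to reduce every definable set to a Boolean combination of distance conditions $d(x,b)\le n$, so that the canonical parameter of a definable set is governed by a finite tuple of real points together with finitely many integer thresholds. The work lies in choosing this finite real tuple $c$ to lie in $\eacl(e)$ while retaining $e\in\edcl(c)$, thereby producing a genuine weak canonical parameter; here the triviality of algebraic closure from part $(a)$, together with the canonical amalgam supplied by free amalgamation, is what makes the relevant real parameters recoverable.
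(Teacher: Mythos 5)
The paper does not actually prove this Fact: its entire ``proof'' is the sentence ``Details on these results can be found in \cite{CoDM2}.'' So your treatment of part $(b)$ (cite \cite{CoDM2}) coincides exactly with what the paper does, and your sketch of part $(a)$ attempts more than the paper itself supplies. Unfortunately, that sketch has a genuine gap at its central step.

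You derive quantifier elimination from the claim that the monster $\U_\cN$ is ``saturated with age $\cK_\cN$,'' hence ultrahomogeneous for finite subsets. But the age of $\U_\cN$ is strictly larger than $\cK_\cN$: as the paper itself emphasizes, the type $\{\neg d_n(x,y):n\in\N\}$ is realized in $\U_\cN$, so finite substructures of the monster can contain pairs at ``infinite distance'' and are $\cN^*$-metric spaces, not members of $\cK_\cN$. Ultrahomogeneity of the countable limit $\cU_\cN$ says nothing about such configurations, and the statement you want (any two finite tuples of $\U_\cN$ with the same quantifier-free type are $\Aut(\U_\cN)$-conjugate) is, in a strongly homogeneous monster, essentially equivalent to quantifier elimination --- so as written the step is close to assuming the conclusion. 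Closing it is precisely the technical content of \cite{CoDM2}: one needs a back-and-forth in which one-point extension types are shown consistent over finite base sets whose points may already be pairwise at infinite distance. The extension axioms one gets for free from the \Fraisse\ class $\cK_\cN$ quantify over bases realizing genuine integer-distance configurations and therefore do not apply to such base sets; your replace-$\infty$-by-a-large-integer trick (which you invoke only for $\kappa^+$-universality, where the problematic distances sit in the space being embedded rather than in the base) must be upgraded to axiom schemas with inequality hypotheses, e.g. $d(y_i,y_j)>r_i+n$, that remain true when $d(y_i,y_j)$ is infinite. Once quantifier elimination is granted, your arguments for $\acl(C)=C$, $\kappa$-homogeneity, and $\kappa^+$-universality go through (modulo the routine observation that free amalgamation extends from $\cK_\cN$ to $\cN^*$-metric spaces), but the QE step itself cannot be waved through via \Fraisse\ ultrahomogeneity alone.
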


Details on these results can be found in \cite{CoDM2}. Our goal is to define two distinct strict independence relations on $T_\cN$. By weak elimination of imaginaries and Theorem \ref{thm:transfer}, we will then obtain two distinct strict independence relations on $T^{\eq}_\cN$. The first strict independence relation is given to us by thorn-forking. 

\begin{theorem}\label{thm:UNrosy}
Thorn-forking is a strict independence relation for $T^{\eq}_\cN$ (i.e. $T_\cN$ is rosy). In particular, $\ind^\thrn$ coincides with $\ind^a$ in $\U_\cN^{\eq}$.
\end{theorem}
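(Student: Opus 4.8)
The plan is to reduce the entire statement to verifying a single axiom---base monotonicity---for algebraic independence in the home sort $\U_\cN$, and then to transfer the conclusion to $\U_\cN^{\eq}$ using the results of the previous section. First I would record the explicit shape of $\ind^a$ in $\U_\cN$. By Fact \ref{thm:UN}$(a)$, $T_\cN$ has quantifier elimination and $\acl(C)=C$ for every $C\subset\U_\cN$. Consequently $\acl(AC)=A\cup C$, $\acl(BC)=B\cup C$, and $\acl(C)=C$, so unwinding the definition of algebraic independence and using the identity $(A\cup C)\cap(B\cup C)=(A\cap B)\cup C$ gives
\[
A\ind^a_C B\miff (A\cup C)\cap(B\cup C)=C\miff A\cap B\seq C.
\]

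With this description in hand, base monotonicity for $\ind^a$ in $\U_\cN$ is a one-line set-theoretic computation: if $A\ind^a_D B$ and $D\seq C\seq B$, then $A\cap B\seq D\seq C$, whence $A\ind^a_C B$. This is the only step that uses the specific structure of $\U_\cN$---namely the triviality of $\acl$---and I expect it to be the main, though quite mild, obstacle, in the sense that the whole theorem turns on it once the background facts from \cite{CoDM2} are granted. By Fact \ref{fact:alg} applied in $\U_\cN$, base monotonicity for $\ind^a$ yields both that $\ind^a$ is a strict independence relation for $T_\cN$ and that $\ind^\thrn$ coincides with $\ind^a$ in $\U_\cN$; in particular $T_\cN$ is real rosy.

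It then remains to pass from $\U_\cN$ to $\U_\cN^{\eq}$, and here I would invoke Fact \ref{thm:UN}$(b)$, that $T_\cN$ has weak elimination of imaginaries. Since $\ind^\thrn=\ind^a$ in $\U_\cN$, the second claim of Corollary \ref{cor2} yields $\ind^\thrn=\ind^a$ in $\U_\cN^{\eq}$, which is the ``in particular'' assertion. Feeding this coincidence into Fact \ref{fact:alg}, now applied in $\U_\cN^{\eq}$, the implication $(i)\Rightarrow(iii)$ shows that $\ind^a=\ind^\thrn$ is a strict independence relation for $T^{\eq}_\cN$, which is the main assertion; equivalently, one may invoke Corollary \ref{cor1} directly, since $T_\cN$ is real rosy with weak elimination of imaginaries and is therefore rosy. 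Either route establishes that thorn-forking is a strict independence relation for $T^{\eq}_\cN$, completing the proof.
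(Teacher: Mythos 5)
Your proposal is correct and follows essentially the same route as the paper: base monotonicity for $\ind^a$ in $\U_\cN$ via triviality of $\acl$ (Fact \ref{thm:UN}$(a)$), then Fact \ref{fact:alg}, Corollary \ref{cor2}, and weak elimination of imaginaries (Fact \ref{thm:UN}$(b)$) to transfer to $\U_\cN^{\eq}$. The paper's proof is just a terser version of the same argument; your explicit computation that $A\ind^a_C B$ reduces to $A\cap B\seq C$ is the content the paper leaves implicit.
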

\begin{proof}
By Fact \ref{thm:UN}$(a)$, $\ind^a$ satisfies base monotonicity in $\U_\cN$. So the result follows from Fact \ref{fact:alg}, Corollary \ref{cor2}, and Fact \ref{thm:UN}$(b)$.
\end{proof}

We now define what we will show to be a second strict independence relation for $T_\cN$.

\begin{definition}\label{def:II}
Define $\ind^\infty$ on $T_\cN$ such that, given $A,B,C\subset\U_\cN$, $A\ind^\infty_C B$ if and only if, for all $a\in A$, if $d(a,B)=0$ then $d(a,C)=0$, and if $d(a,C)=\infty$ then $d(a,B)=\infty$.
\end{definition}

Since $\acl(C)=C$ for all $C\subset\U_\cN$, and $d(a,C)=0$ if and only if $a\in C$, we can rephrase $\ind^\infty$ as: $A\ind^\infty_C B$ if and only if $A\ind^a_C B$ and, for all $a\in A$, if $d(a,C)=\infty$ then $d(a,B)=\infty$. 

\begin{theorem}\label{thm:inftySIR}
The relation $\ind^\infty$ is a strict independence relation for $T_\cN$.
\end{theorem}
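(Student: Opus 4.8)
The plan is to exploit the rephrasing given just before the statement: $A\ind^\infty_C B$ holds if and only if $A\ind^a_C B$ together with the \emph{infinity condition} that, for every $a\in A$, $d(a,C)=\infty$ implies $d(a,B)=\infty$. Since $\ind^a$ is already a strict independence relation for $T_\cN$ (Fact \ref{fact:alg} together with Fact \ref{thm:UN}$(a)$), most axioms will reduce to the corresponding axiom for $\ind^a$ plus a short check of the infinity condition. The key preliminary observation is to introduce the equivalence relation $a\sim b\miff d(a,b)<\infty$ on $\U_\cN$, whose classes I will call \emph{galaxies}, and to note that $d(a,C)<\infty$ says exactly that the galaxy of $a$ meets $C$. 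Taking the contrapositive, the infinity condition becomes: every galaxy meeting both $A$ and $B$ also meets $C$. Phrased this way it is visibly symmetric in $A$ and $B$, which is what makes symmetry for $\ind^\infty$ transparent.

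First I would dispatch the routine axioms. Invariance is clear, since automorphisms preserve $d$ (including the value $\infty$) and hence galaxies. Monotonicity, base monotonicity, and transitivity each follow by combining the same axiom for $\ind^a$ with the galaxy reformulation; for instance, in base monotonicity one enlarges the base from $D$ to $C$ with $D\seq C\seq B$, and a galaxy that meets $D$ automatically meets $C$. Finite character is immediate because both conjuncts are quantified over single elements $a\in A$. Anti-reflexivity is immediate as well, since $\ind^\infty$ implies $\ind^a$ and $\ind^a$ is anti-reflexive (using $\acl(C)=C$). For local character, given $A$ and $B$ I would set $C=(A\cap B)\cup R$, where $R\seq B$ contains one point from each galaxy meeting both $A$ and $B$. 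Then $A\cap B\seq C$ gives $A\ind^a_C B$, the galaxy condition holds by construction, and since there are at most $|A|$ such galaxies we get $|C|<(|A|+\aleph_0)^+$, so $\kappa(A)=(|A|+\aleph_0)^+$ works.

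The main obstacle is extension, which is where the free amalgamation structure of $\cK_\cN$ enters. Given $A\ind^\infty_C B$ and $\hat B\supseteq B$, I would construct $A'$ by freely amalgamating $A$ with $\hat B$ over the common part $B\cup C$, i.e. by assigning the maximal distances
$$
d(a,y)=d_{\max}(a,y/B\cup C)=\inf_{x\in B\cup C}\bigl(d(a,x)+d(x,y)\bigr)
$$
for $a\in A$ and $y\in\hat B\setminus(B\cup C)$, with value $\infty$ when no finite path through $B\cup C$ exists, and placing $A'$ entirely at infinite distance from $\hat B$ in the degenerate case $B\cup C=\emptyset$. This preserves all distances from $A$ to $B\cup C$, so the resulting $\cN^*$-metric space embeds into $\U_\cN$ fixing $\hat B\cup C$ by the universality and homogeneity in Fact \ref{thm:UN}$(a)$, yielding $A'\equiv_{BC}A$. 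It then remains to check the two conjuncts of $A'\ind^\infty_C\hat B$. The algebraic part holds because the maximal distance from a point of $A'$ to a new point of $\hat B$ is strictly positive, vanishing only through a common point of $B\cup C$, so $A'\cap\hat B\seq C$. The infinity condition is where maximality is essential: if $a\in A$ has $d(a,C)=\infty$, then $d(a,B)=\infty$ by the original independence, hence $d(a,x)=\infty$ for every $x\in B\cup C$, and the maximal assignment forces $d(a',y)=\infty$ for all $y\in\hat B$, so the galaxy of $a'$ misses $\hat B$ and the condition is vacuous there; whereas if $d(a,C)<\infty$, the galaxy of $a'$ already meets $C$ and the condition holds automatically. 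The crux of the whole argument is exactly this point, namely that maximal amalgamation keeps every galaxy disjoint from the base disjoint from the enlarged side $\hat B$, which is precisely what the infinity condition demands.
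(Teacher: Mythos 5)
Your proof is correct and takes essentially the same route as the paper's: the same decomposition of $\ind^\infty$ into $\ind^a$ plus the infinity condition (your ``galaxy'' reformulation is just a repackaging of the paper's triangle-inequality argument for symmetry), local character by picking representative points of $B$, and extension via the maximal/free amalgam $d_{\max}(\cdot,\cdot/BC)$ realized in the monster using quantifier elimination, universality, and homogeneity. The only elided step is in the algebraic part of extension: to pass from $A'\cap\hat{B}\seq B\cup C$ to $A'\cap\hat{B}\seq C$ you must invoke $A\cap B\seq C$ from the hypothesis $A\ind^a_C B$, exactly as the paper does in the chain $d(a',BC)=0\Rightarrow d(a,BC)=0\Rightarrow d(a,C)=0$.
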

\begin{proof}
Invariance and finite character are easy.

\emph{Symmetry}: Suppose $A\ind^\infty_C B$. Then $B\ind^a_C A$, and so it remains to fix $b\in B$, with $d(b,C)=\infty$, and show $d(a,b)=\infty$ for all $a\in A$. Given $a\in A$, we have $\infty=d(b,C)\leq d(a,b)+d(a,C)$ by the triangle inequality. Therefore, we either directly have $d(a,b)=\infty$, or we have $d(a,C)=\infty$, in which case $A\ind^\infty_C B$ implies $d(a,b)=\infty$.

\emph{Transitivity}, \emph{monotonicity}, and \emph{base monotonicity}: Since we have shown symmetry, we can verify all three axioms with the following claim: if $D\seq C\seq B$ then $A\ind^\infty_D B$ if and only if $A\ind^\infty_D C$ and $A\ind^\infty_C B$. This is clearly true for $\ind^a$, so it suffices to fix $a\in A$ and show that the following are equivalent: 
\begin{enumerate}[$(i)$]
\item $d(a,D)=\infty$ implies $d(a,B)=\infty$;
\item $d(a,D)=\infty$ implies $d(a,C)=\infty$, and $d(a,C)=\infty$ implies $d(a,B)=\infty$.
\end{enumerate}
The implication $(ii)\Rightarrow(i)$ is trivial; and $(i)\Rightarrow(ii)$ follows from the fact that, since $D\seq C\seq B$, we have $d(a,D)\geq d(a,C)\geq d(a,B)$.

\emph{Extension}: It follows from Fact \ref{thm:UN}$(a)$ that, for all $A,B,C\subset\U_\cN$, there is $A'\equiv_C A$ such that, for all $a'\in A'$ and $b\in B$, $d(a',b)=d_{\max}(a',b/C)$. Note also that $d(a,C)\leq d_{\max}(a,b/C)$ for any $a,b\in\U_\cN$ and $C\subset\U_\cN$.

Now assume $A\ind^\infty_C B$ and $\hat{B}\supseteq B$. Let $A'\equiv_{BC} A$ such that, for all $a'\in A'$ and $b\in\hat{B}$, $d(a',b)=d_{\max}(a',b/BC)$. We show $A'\ind^\infty_C \hat{B}$. Fix $a'\in A'$ and suppose $d(a',\hat{B})=0$. Then $a'\in\hat{B}$, and so $d_{\max}(a',a'/BC)=d(a',a')=0$. Therefore $d(a',BC)=0$, which implies $d(a,BC)=0$. Then $d(a,C)=0$, and so $d(a',C)=0$. Next, fix $a'\in A'$ and suppose $d(a',C)=\infty$. Then $d(a,C)=\infty$, and so $A\ind^\infty_C B$ implies $d(a,BC)=\infty$. For any $b\in\hat{B}$, we have $d(a',b)=d_{\max}(a',b/BC)\geq d(a',BC)=d(a,BC)=\infty$, and so $d(a',\hat{B})=\infty$. 

\emph{Local character}: Fix $A,B\subset\U_\cN$. We will find $C\seq B$ such that $|C|\leq|A|$ and $A\ind^\infty_C B$. We may assume $B$ is nonempty. Given $a\in A$, choose $b_a\in B$ such that $d(a,B)=d(a,b_a)$. Let $C=\{b_a:a\in A\}$. For any $a\in A$, we have $d(a,C)=d(a,B)$, and this clearly implies $A\ind^\infty_C B$.   

\emph{Anti-reflexivity}: Trivial, since $\ind^\infty$ implies $\ind^a$. 
\end{proof}

\begin{corollary}\label{cor:TN}
$T^{\eq}_\cN$ has more than one strict independence relation.
\end{corollary}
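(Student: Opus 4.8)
The plan is to exhibit two distinct strict independence relations on $\U_\cN$ and then push them through the bijection of Theorem \ref{thm:transfer}. We already have two candidates in hand: algebraic independence $\ind^a$ and the relation $\ind^\infty$. That $\ind^\infty$ is a strict independence relation for $T_\cN$ is exactly Theorem \ref{thm:inftySIR}. That $\ind^a$ is one follows from Fact \ref{thm:UN}$(a)$, which gives base monotonicity for $\ind^a$ in $\U_\cN$, together with the implication $(ii)\Rightarrow(iii)$ of Fact \ref{fact:alg}.

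The crux is to check that $\ind^a$ and $\ind^\infty$ are genuinely different relations on $\U_\cN$. By the rephrasing following Definition \ref{def:II}, $\ind^\infty$ is exactly $\ind^a$ strengthened by the condition that, for all $a\in A$, $d(a,C)=\infty$ implies $d(a,B)=\infty$; so it suffices to produce a triple where this extra condition fails while $\ind^a$ still holds. Taking $C=\emptyset$ does the job. Since $d(a,\emptyset)=\infty$ for every $a$, the infinite-distance condition becomes the demand that every point of $A$ lie at infinite distance from every point of $B$, whereas $A\ind^a_\emptyset B$ merely asks $A\cap B=\emptyset$ (recall from Fact \ref{thm:UN}$(a)$ that $\acl(X)=X$). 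Concretely, I would pick distinct $a,b\in\U_\cN$ with $d(a,b)=1$ and set $A=\{a\}$, $B=\{b\}$, $C=\emptyset$; then $A\ind^a_\emptyset B$, while $A\ind^\infty_\emptyset B$ fails because $d(a,\emptyset)=\infty$ but $d(a,B)=1$.

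Finally I would invoke Theorem \ref{thm:transfer}. Since $T_\cN$ has weak elimination of imaginaries by Fact \ref{thm:UN}$(b)$, the map $\ind\mapsto\ind^{\eq}$ is an injection from strict independence relations for $T_\cN$ into strict independence relations for $T^{\eq}_\cN$. As $\ind^a\neq\ind^\infty$ on $\U_\cN$, injectivity yields $\ind^{a,\eq}\neq\ind^{\infty,\eq}$, and both are strict independence relations for $T^{\eq}_\cN$ since they lie in the range of the map. I expect no genuine obstacle: all the substance has been deposited into the preceding theorems, and the only thing left to verify is the distinctness witness above, which is immediate from the definitions.
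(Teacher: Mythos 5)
Your proposal is correct and follows essentially the same route as the paper: cite Theorem \ref{thm:inftySIR} and Fact \ref{fact:alg} (via Fact \ref{thm:UN}$(a)$) for the two strict independence relations on $T_\cN$, separate them with a pair of points at finite distance over $C=\emptyset$, and transfer to $T^{\eq}_\cN$ via the injectivity of the map in Theorem \ref{thm:transfer} together with Fact \ref{thm:UN}$(b)$. The paper's witness is the same (any $a\neq b$ with $d(a,b)<\infty$ and $C$ at infinite distance from both, e.g.\ $C=\emptyset$), so no comparison beyond this is needed.
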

\begin{proof}
We have shown that $\ind^a$ and $\ind^\infty$ are strict independence relations for $T_\cN$. Therefore, by Theorem \ref{thm:transfer} and Fact \ref{thm:UN}$(b)$, it suffices to show that these two relations are not the same in $\U_\cN$. To see this, fix distinct $a,b\in\U_\cN$ such that $d(a,b)<\infty$. Fix any subset $C\subset\U_\cN$ such that $d(a,C)=\infty=d(b,C)$ (e.g. $C=\emptyset$). Then $a\ind^a_C b$ by Fact \ref{thm:UN}$(a)$, and we clearly have $a\nind^\infty_C b$.
\end{proof}  

\section{Remarks and Generalizations}

A major motivation for Question 1.7 of \cite{Adgeo} comes from the following open problem in the study of simple theories (asked by several authors, e.g. \cite{Adgeo}, \cite{EaOn}). 

\begin{question}\label{Q:big}
Suppose $T$ is a simple theory. Are forking and thorn-forking the same in $T^{\eq}$?
\end{question}

This question is known to have a positive answer if $T$ is additionally assumed to eliminate hyperimaginaries or satisfy the stable forking conjecture (see \cite{EaOn}). If $T$ is simple then forking independence is the strongest strict independence relation for $T^{\eq}$ and thorn-forking independence is the weakest \cite{Adgeo}. So a negative answer to Question \ref{Q:big} would follow from the existence of a \emph{simple} theory $T$, such that $T^{\eq}$ has more than one strict independence relation. Therefore, it is worth observing that our example, $T_\cN$, is not simple. This is shown in \cite{CoDM2} (see Fact \ref{fact:SOPn} below), but also follows by adapting the proof in \cite{CaWa} that, for a fixed $n\geq 3$, the \Fraisse\ limit of metric spaces with distances in $\{0,1,\ldots,n\}$ is not simple. On the other hand, when $n=2$, this \Fraisse\ limit is precisely the countable random graph (where the graph relation is $d(x,y)=1$), which is well-known to have a simple theory. Therefore, in this context of ``generalized" Urysohn spaces, a natural question is whether there is a suitable choice of distance set such that, if $T$ is the theory of the associated Urysohn space, then $T$ is simple and the analog of $\ind^\infty$ still yields a strict independence relation for $T^{\eq}$ distinct from thorn-forking. The goal of this section is to demonstrate that this is unlikely. First, we define a precise context for studying Urysohn spaces over arbitrary distance sets. 

\begin{definition}\label{def:DM}
A structure $\cR=(R,\p,\leq,0)$ is a \textbf{distance monoid} if $(R,\p,0)$ is a commutative monoid and $\leq$ is a total, $\p$-invariant order with least element $0$. 
\end{definition}

Fix a countable distance monoid $\cR$. We define an \emph{$\cR$-metric space} to be a set equipped with an $\cR$-valued metric. Let $\cK_\cR$ denote the class of finite $\cR$-metric spaces. We consider $\cK_\cR$ as a class of $\mathcal{L}_R$-structures, where $\mathcal{L}_R=\{d_r(x,y):r\in R\}$ and $d_r(x,y)$ is a binary relation interpreted as $d(x,y)\leq r$. Then $\cK_\cR$ is a \Fraisse\ class by a similar argument as for $\cK_\cN$, or by adapting the following fact, due to Sauer \cite{Sa13b}, which also motivates Definition \ref{def:DM}.

\begin{fact}\label{fact:Sa}
Suppose $S\seq\R^{\geq0}$ is countable, contains $0$, and is closed under the operation $u+_S v:=\sup\{x\in S:x\leq u+v\}$. Then the class of finite metric spaces, with distances in $S$, is a \Fraisse\ class if and only if $+_S$ is associative on $S$ (and so $(S,+_S,\leq,0)$ is a distance monoid).
\end{fact}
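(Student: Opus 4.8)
The plan is to verify the three non-trivial \Fraisse\ conditions, noting that the hereditary property, joint embedding, and the bound on isomorphism types are routine (the language $\cL_R$ is countable since $S$ is), so that the entire content lies in the amalgamation property, which I claim is equivalent to associativity of $+_S$. Before addressing either direction I would record the basic arithmetic of $+_S$: it is commutative, has $0$ as a two-sided identity (since $u +_S 0 = \sup\{x \in S : x \le u\} = u$ for $u \in S$), is monotone in each argument, and satisfies $\max(u,v) \le u +_S v \le u + v$. The key observation is that, although an $S$-metric space is defined using the ordinary triangle inequality, every such space automatically satisfies the strengthened inequality $d(x,z) \le d(x,y) +_S d(y,z)$: indeed $d(x,z)\in S$ and $d(x,z) \le d(x,y) + d(y,z)$, so $d(x,z)$ lies in the set whose supremum defines $d(x,y) +_S d(y,z)$. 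Thus $\cK_S$ coincides with the class of metric spaces valued in $(S, +_S, \le, 0)$.

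For the direction assuming associativity, I would first check that $(S, +_S, \le, 0)$ is then a (countable) distance monoid: commutativity and the identity are already noted, associativity is the hypothesis, and invariance of $\le$ under $+_S$ follows from monotonicity. Amalgamation then follows either by citing that $\cK_\cR$ is \Fraisse\ for every countable distance monoid $\cR$, or directly by free amalgamation: given $S$-metric spaces $A,B$ with $\emptyset \ne C = A \cap B$, extend the metric to $A \cup B$ by $d(a,b) = \min_{c \in C}(d(a,c) +_S d(c,b))$ for $a \in A \setminus C$, $b \in B \setminus C$. Symmetry and membership in $S$ are immediate, so the heart of the matter is the triangle inequality. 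The only cases needing work mix a newly defined distance; in the representative case of $a,a' \in A$ and $b \in B \setminus C$, choosing $c^*$ to realize $d(a',b)$ and combining the strengthened triangle inequality $d(a,c^*) \le d(a,a') +_S d(a',c^*)$ with monotonicity and a single application of associativity yields $d(a,b) \le d(a,a') +_S d(a',b) \le d(a,a') + d(a',b)$. This triangle-inequality bookkeeping, and isolating exactly where associativity is used, is the main technical obstacle in this direction.

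For the converse I would argue contrapositively, constructing an explicit failure of amalgamation from a failure of associativity. Suppose $(r +_S s) +_S t \ne r +_S (s +_S t)$; since $0$ is an identity, $r,s,t$ are necessarily positive, and by commutativity I may assume $p := (r +_S s) +_S t < q := r +_S (s +_S t)$. The point is that $q > (r +_S s) + t$: otherwise $q$, being an element of $S$ bounded by $(r +_S s) + t$, would satisfy $q \le (r +_S s) +_S t = p$, contradicting $p < q$. Now let $A = \{x_0,x_1,x_3\}$ carry $d(x_0,x_1) = r$, $d(x_1,x_3) = s +_S t$, $d(x_0,x_3) = q$, and let $B = \{x_1,x_2,x_3\}$ carry $d(x_1,x_2) = s$, $d(x_2,x_3) = t$, $d(x_1,x_3) = s +_S t$; one checks from $\max(u,v) \le u +_S v \le u + v$ that both are genuine $S$-metric spaces and that they agree on the common edge $C = \{x_1,x_3\}$. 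In any amalgam the distance $d(x_0,x_2)$ is forced to satisfy $d(x_0,x_2) \le d(x_0,x_1) + d(x_1,x_2) = r + s$, hence $d(x_0,x_2) \le r +_S s$, while the triangle inequality through $x_3$ gives $q = d(x_0,x_3) \le d(x_0,x_2) + t$, hence $d(x_0,x_2) \ge q - t > r +_S s$. These bounds are incompatible, so no amalgam exists and $\cK_S$ fails amalgamation. Finding this four-point obstruction, and in particular the inequality $q > (r +_S s) + t$ that drives it, is the crux of the converse.
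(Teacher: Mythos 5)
The paper does not actually prove this fact: it is quoted from Sauer \cite{Sa13b}, and the surrounding text only remarks that $\cK_\cR$ is a \Fraisse\ class ``by a similar argument as for $\cK_\cN$.'' So there is no internal proof to compare against, and your argument must be judged on its own merits; it is correct. Your key observation---that any finite metric space with distances in $S$ automatically satisfies the strengthened inequality $d(x,z)\le d(x,y)+_S d(y,z)$, since $d(x,z)$ lies in the set whose supremum defines the right-hand side---is exactly the right reduction, and your forward direction is the same free-amalgamation mechanism the paper itself invokes for $\cK_\cN$, with $+_S$ in place of $+$ and associativity being precisely what makes the mixed triangle inequalities close up. One gap in completeness there: you verify only the mixed case ``new $\leq$ old $+$ new''; the remaining case, an old distance bounded by two new ones (e.g.\ $d(a,a')\le d(a,b)+d(b,a')$ with $a,a'\in A\setminus C$ and $b\in B\setminus C$), needs two applications of the strengthened inequality (inside $A$ and inside $B$) followed by an associativity--commutativity rearrangement into $\bigl(d(a,c_1)+_S d(c_1,b)\bigr)+_S\bigl(d(b,c_2)+_S d(c_2,a')\bigr)$, but it goes through in the same way, so this is bookkeeping rather than a missing idea. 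Your converse is clean and complete: the inequality $q>(r+_S s)+t$ is correctly derived (otherwise $q\in S$ would witness $q\le p$), both three-point configurations are genuine $S$-metric spaces, and the two bounds you force on $d(x_0,x_2)$ in any putative amalgam are incompatible even in the degenerate case where an amalgam identifies $x_0$ with $x_2$, so no case is overlooked.
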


\begin{definition}
Given a countable distance monoid $\cR$, we define the \textbf{$\cR$-Urysohn space}, denoted $\cU_\cR$, to be the \Fraisse\ limit of $\cK_\cR$. Let $T_\cR=\Th(\cU_\cR)$, and let $\U_\cR$ denote a sufficiently saturated monster model of $T_\cR$.  
\end{definition}

When working with $T_\cR$, we again face the obstacle that $\U_\cR$ cannot be interpreted as an $\cR$-metric space. As in the last section, this is resolved by constructing a distance monoid extension $\cR^*=(R^*,\p,\leq,0)$ of $\cR$. The construction is quite technical, and so we refer the reader to \cite{CoDM2}. The rough idea is that the set $R^*$ corresponds to the space of quantifier-free $2$-types (over $\emptyset$) consistent with $T_\cR$, and the ordering $\leq$ and operation $\p$ extend to $R^*$ in a canonical way. Then, given a model $M\models T_\cR$, one defines an $\cR^*$-metric on $M$ by setting the distance between two points $a,b\in M$ to be the element of $R^*$ corresponding the the quantifier-free $2$-type realized by $(a,b)$.

For example, define $\cQ:=(\Q^{\geq0},+,\leq,0)$. Then in $\cQ^*$, the set $(\Q^{\geq0})^*$ can be identified with $\R^{\geq0}\cup\{q^+:q\in\Q^{\geq0}\}\cup\{\infty\}$, where $q^+$ is a new symbol for an immediate successor of $q$ and $\infty$ is a new symbol for an infinite element. 

Suppose $\cR$ is a countable distance monoid. The goal of this section is to show that if $\ind^\infty$ can be defined for $T_\cR$, and moreover yields a strict independence relation for $T^{\eq}_\cR$ distinct from thorn-forking, then $T_\cR$ is not simple. The next definition lists the properties we need to define $\ind^\infty$ on $T_\cR$, prove $\ind^\infty$ is a strict independence relation, and lift $\ind^\infty$ to $T^{\eq}_\cR$ using Theorem \ref{thm:transfer}.

\begin{definition}\label{def:suit}
A countable distance monoid $\cR$ is \textbf{suitable} if $\cR$ has no maximal element, $T_\cR$ has quantifier elimination, and $T_\cR$ has weak elimination of imaginaries.
\end{definition}

The following fact provides natural examples of suitable distance monoids.

\begin{fact}\textnormal{\cite{CoDM2}}
If $(G,\p,\leq,0)$ is a nontrivial countable ordered abelian group, and $\cR=(G^{\geq0},\p,\leq,0)$, then $\cR$ is a suitable distance monoid.
\end{fact}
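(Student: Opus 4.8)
The plan is to verify the three requirements of Definition \ref{def:suit} in turn: that $\cR=(G^{\geq0},\p,\leq,0)$ has no maximal element, that $T_\cR$ has quantifier elimination, and that $T_\cR$ has weak elimination of imaginaries. One should first note that $(G^{\geq0},\p,\leq,0)$ genuinely is a distance monoid, so that $\cK_\cR$ is a \Fraisse\ class and $\U_\cR$, $\cR^*$ are defined: $G^{\geq0}$ is closed under $\p$ and cancellative, and $\leq$ restricts to a total, $\p$-invariant order on $G^{\geq0}$ with least element $0$. The absence of a maximal element is then immediate, since $G$ nontrivial yields an element $h>0$, and $\p$-invariance gives $g<g\p h$ for every $g\in G^{\geq0}$.

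For quantifier elimination I would argue through the saturated model, working in $\U_\cR$ viewed as an $\cR^*$-metric space. It suffices to show that any two finite tuples with the same quantifier-free type — equivalently, the same matrix of pairwise $\cR^*$-distances — are conjugate under $\Aut(\U_\cR)$, since homogeneity of the monster then upgrades equality of quantifier-free types to equality of complete types. This follows from a back-and-forth argument once one establishes the one-point extension property: given a finite $A\seq\U_\cR$ and an assignment $\delta$ of an $\cR^*$-distance from a new point to each element of $A$ compatible with the triangle inequality, $\delta$ is realized in $\U_\cR$. For distances valued in $G^{\geq0}$ this is exactly the universality and amalgamation built into the \Fraisse\ limit together with saturation; limit and infinite distances are precisely what the passage to $\cR^*$ (analyzed in \cite{CoDM2}) is designed to accommodate, and the regularity of $G^{\geq0}$ as the non-negative part of an ordered group is what guarantees there is no obstruction to solving the metric constraints. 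Quantifier elimination then yields $\acl(C)=C$ for all real $C$, exactly as in Fact \ref{thm:UN}$(a)$.

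The substantial part is weak elimination of imaginaries, and this is where I expect the main difficulty. The strategy is to attach to each imaginary a finite real \emph{support} and verify the two conditions of Definition \ref{def:main}(1). Fix $e\in\U_\cR^{\eq}$, write $e$ as a class $a/E$ of a finite real tuple $a$ under a $\emptyset$-definable equivalence relation $E$, and construct a finite set $c\seq\U_\cR$ such that, for every $\sigma\in\Aut(\U_\cR)$, $\sigma$ fixes $e$ if and only if $\sigma$ fixes $c$ setwise. Granting such a $c$: any $\sigma$ fixing $c$ pointwise fixes $c$ setwise and hence fixes $e$, giving $e\in\edcl(c)$; and each element of $c$ has finite orbit over $e$ (it stays within the finite set $c$ under any $\sigma$ fixing $e$), whence $c\seq\eacl(e)$. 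Thus $c$ is a weak canonical parameter for $e$, and weak elimination of imaginaries follows.

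The crux is therefore the construction of this canonical support, which requires a genuine analysis of $\emptyset$-definable equivalence relations in the homogeneous $\cR^*$-metric space $\U_\cR$ — identifying the finite configuration of real points that determines $a/E$ while being itself determined up to finite ambiguity. Ultrahomogeneity (from quantifier elimination) reduces the comparison of stabilizers to a comparison of orbits of finite real configurations, and the task is to show that the relevant orbit data is controlled by finitely many points; the structure of $G^{\geq0}$ and the description of $\cR^*$ in \cite{CoDM2} are exactly what make the candidate support finite and canonical. This mirrors, and generalizes, the weak elimination of imaginaries for $T_\cN$ recorded in Fact \ref{thm:UN}$(b)$.
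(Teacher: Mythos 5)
The first thing to note is that the paper contains no proof of this Fact at all: it is quoted as a black box from \cite{CoDM2}, where it is a consequence of that paper's main theorems. So the only question is whether your outline would itself constitute a proof, and it would not. Your decomposition into the three conditions of Definition \ref{def:suit} is the right one; your argument that $G^{\geq0}$ has no maximal element is essentially correct (though strictness of $g<g\p h$ uses cancellation in $G$, not $\p$-invariance alone, which only gives $g\leq g\p h$ --- a telling slip, since cancellation is exactly the hypothesis that does the work everywhere else); and your Galois-theoretic reduction of weak elimination of imaginaries to finding a finite real set $c$ whose setwise stabilizer in $\Aut(\U_\cR)$ coincides with the stabilizer of $e$ is sound in a monster model. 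But for the two substantive conditions --- quantifier elimination and weak elimination of imaginaries --- your text only restates what must be shown and then asserts that ``the regularity of $G^{\geq0}$'' guarantees it. Those two conditions are precisely the theorems of \cite{CoDM2} being cited, and they are not routine: both can fail for countable distance monoids, so any proof must invoke the ordered-group hypothesis at an identifiable step, which your sketch never does.

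Concretely: first, for quantifier elimination, the one-point extension step of your back-and-forth is not ``exactly the universality and amalgamation built into the \Fraisse\ limit together with saturation.'' Universality and amalgamation govern configurations with distances in $\cR$; in the saturated model, the quantifier-free type of a new point over a tuple assigns distances in $\cR^*$, which may be limit or infinite values, and whether such a configuration is even consistent with $T_\cR$ (i.e., finitely approximable by genuine $\cR$-metric spaces) is a continuity condition on $\cR$ that \cite{CoDM2} isolates and characterizes; it is not a formal consequence of ultrahomogeneity of the \Fraisse\ limit. Second, for weak elimination of imaginaries, if some $r>0$ in $\cR$ satisfies $r\p r=r$, then $d(x,y)\leq r$ is a nontrivial $\emptyset$-definable equivalence relation on $\U_\cR$ whose classes admit no weak canonical parameter --- this paper itself invokes exactly that obstruction in its proof of Fact \ref{fact:SOPn}. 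Nothing in your sketch excludes such idempotents (for instance, the ultrametric monoid $(\N,\max,\leq,0)$ satisfies everything you actually verify, yet fails weak elimination of imaginaries); for $\cR=G^{\geq0}$ they are excluded by cancellation. The ``crux'' you correctly name --- constructing the finite canonical support for an arbitrary imaginary --- is left entirely unconstructed, so the proposal is an outline of the statement rather than a proof of it; citing \cite{CoDM2} for these two conditions, as the paper does, would be the honest conclusion of your argument.
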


Suppose $\cR$ is a suitable distance monoid. By quantifier elimination, we again have $\acl(C)=C$ for all $C\subset\U_\cR$, and that $\U_\cR$ is a $\kappa^+$-universal and $\kappa$-homogeneous $\cR^*$-metric space, where $\kappa$ is the saturation cardinal of $\U_\cR$. Moreover, since $\cR$ has no maximal element, the type $\{\neg d_r(x,y):r\in R\}$ is realized in $\U_\cR$, and therefore corresponds to a new element $\infty$ in $\cR^*$. So we may define $\ind^\infty$ on $\U_\cR$ exactly as in Definition \ref{def:II} (with $\cR$ in place of $\cN$). 

\begin{theorem}
If $\cR$ is a suitable distance monoid then $\ind^\infty$ is a strict independence relation for $T_\cR$, which is distinct from $\ind^{\thrn}$.
\end{theorem}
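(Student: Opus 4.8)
The plan is to transcribe the proofs of Theorem~\ref{thm:inftySIR} and Corollary~\ref{cor:TN}, isolating the few places where a feature special to $\cN^*$ was used and replacing each by a consequence of suitability. The facts I would use freely are: quantifier elimination, which gives $\acl(C)=C$ for all $C\subset\U_\cR$ and makes $\U_\cR$ a $\kappa^+$-universal, $\kappa$-homogeneous $\cR^*$-metric space; closure of $\cK_\cR$ under maximal (free) amalgamation, exactly as for $\cK_\cN$; and the order-arithmetic of $\cR^*$, namely $x\p y\geq x$ for all $x,y$ (since $y\geq 0$ and $\leq$ is $\p$-invariant). The one new arithmetic fact I would record at the outset is that $x\p y=\infty$ forces $x=\infty$ or $y=\infty$: an element of $R^*$ equals $\infty$ exactly when it exceeds every element of $R$, so a non-$\infty$ element is $\leq r$ for some $r\in R$, and if $x\leq r$, $y\leq s$ with $r,s\in R$ then $x\p y\leq r\p s\in R$ is again non-$\infty$.

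Granting these, invariance, finite character, and anti-reflexivity are immediate, and the transitivity/monotonicity/base-monotonicity package goes through verbatim, since it relies only on $d(a,D)\geq d(a,C)\geq d(a,B)$ for $D\seq C\seq B$ and the total order on $\cR^*$. Symmetry is where I would be slightly more careful than the $\cN^*$ argument, because the latter tacitly uses that $d(a,C)$ is attained. Instead I would argue directly: given $A\ind^\infty_C B$ and $b\in B$ with $d(b,C)=\infty$, fix $a\in A$ and suppose $d(a,b)<\infty$; if some $c\in C$ had $d(a,c)<\infty$ then $d(b,c)\leq d(a,b)\p d(a,c)$ would be non-$\infty$ by the recorded fact, contradicting $d(b,C)=\infty$, so $d(a,C)=\infty$, whence $d(a,B)=\infty$ and in particular $d(a,b)=\infty$, a contradiction. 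For extension I would again realize the free amalgam: by quantifier elimination, universality, and homogeneity, together with closure of $\cK_\cR$ under maximal amalgamation, there is $A'\equiv_{BC}A$ with $d(a',b)=d_{\max}(a',b/BC)$ for $a'\in A'$, $b\in\hat B$, and the two defining clauses of $\ind^\infty_C$ are then verified as for $\cN$, using $d_{\max}(a,b/BC)\geq d(a,BC)$.

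The step that needs genuine adaptation, and which I expect to be the main obstacle, is local character, since the $\cN^*$ proof chose points attaining the infimum $d(a,B)$, and in a general $\cR^*$ this infimum need not be attained. The key observation is that $\ind^\infty$ only demands, for $a\in A$, that $d(a,B)<\infty$ imply $d(a,C)<\infty$, so it suffices to witness finiteness rather than to realize the exact distance. Thus I would take $\kappa(A)=|A|^+$ and, given $B$, choose for each $a\in A$ with $d(a,B)<\infty$ a single $b_a\in B$ with $d(a,b_a)<\infty$ (such a $b_a$ exists since $d(a,B)<\infty$), taking $b_a=a$ whenever $a\in B$. Then $C:=\{b_a\}$ has $|C|\leq|A|$ and contains $A\cap B$, so $A\ind^a_C B$ (as $\acl$ is trivial), and $d(a,C)<\infty$ whenever $d(a,B)<\infty$, giving $A\ind^\infty_C B$.

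Finally, for distinctness from thorn-forking I would first note that $\ind^\thrn=\ind^a$ in $\U_\cR$: with $\acl$ trivial one has $A\ind^a_C B$ iff $A\cap B\seq C$, from which base monotonicity of $\ind^a$ is immediate, so Fact~\ref{fact:alg} applies. Since $\cR$ has no maximal element it contains a nonzero element, so $\U_\cR$ has distinct points $a,b$ with $d(a,b)<\infty$; taking $C=\emptyset$, so that $d(a,\emptyset)=\infty$, we get $a\ind^a_\emptyset b$ but $a\nind^\infty_\emptyset b$, exactly as in Corollary~\ref{cor:TN}. Everything outside local character and the realizability of the free amalgam is a direct transcription once the arithmetic of $\cR^*$ is in place.
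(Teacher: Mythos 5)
Your strategy is the same as the paper's---transcribe the proofs of Theorem~\ref{thm:inftySIR} and Corollary~\ref{cor:TN}, replacing each special feature of $\cN^*$ by a cited property of $\cR^*$---and two of your adaptations are actually cleaner than the paper's: your pointwise symmetry argument avoids the set-level triangle inequality $d(b,C)\leq d(a,b)\p d(a,C)$ that the paper imports from \cite{CoDM2}, and your local character argument, which only witnesses \emph{finiteness} of distances rather than realizing the infimum, gives $|C|\leq|A|$ where the paper takes witnesses $b^a_r$ for every realized distance and only gets $|C|\leq|A|+|R^*|$.

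However, there is a genuine gap: you never record the other key fact about $\cR^*$ with which the paper's proof begins, namely that $0$ has an immediate successor in $\cR^*$, equivalently that $d(a,C)=0$ if and only if $a\in C$. In $\cN^*$ this is automatic (positive distances are $\geq 1$), but for general $\cR$ it is a nontrivial feature of the type-space construction of $\cR^*$: a priori the infimum of the positive distances from $a$ to $C$ could be $0$ with $a\notin C$. Without this fact, anti-reflexivity---which you dismiss as ``immediate''---is actually \emph{false}: a point $a\notin C$ at infinitesimal distance from $C$ would satisfy $a\ind^\infty_C a$ (both clauses of Definition~\ref{def:II} hold) while $a\notin\acl(C)=C$. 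The same fact underlies the rephrasing ``$A\ind^\infty_C B$ iff $A\ind^a_C B$ plus the finiteness clause'' on which your local character step relies: from $A\cap B\seq C$ and finiteness alone, the clause $d(a,B)=0\Rightarrow d(a,C)=0$ does not follow. It is used once more in extension, to pass from $d(a',\hat{B})=0$ to $a'\in\hat{B}$. So your list of order-arithmetic facts must be supplemented by this one, which the paper cites from the construction in \cite{CoDM2}; a smaller omission of the same kind is completeness of the order on $\cR^*$, which is what guarantees that the infimum defining $d_{\max}(a,b/BC)$ over an infinite base set exists. With those two citations added, your proof goes through.
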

\begin{proof}
First, we have  $\ind^a=\ind^{\thrn}$ in $\U_\cR$ by Fact \ref{fact:alg}. Next, it follows from the construction of $\cR^*$ in \cite{CoDM2} that $0$ always has an immediate successor in $\cR^*$ (e.g. $0^+$ in $\cQ^*$). Therefore, given $a\in\U_\cR$ and $C\subset\U_\cR$, we have $d(a,C)=0$ if and only if $a\in C$. In particular, $\ind^\infty$ implies $\ind^a$, and the same argument as in the proof of Corollary \ref{cor:TN} shows that $\ind^\infty$ and $\ind^a$ are distinct. Finally, the argument that $\ind^\infty$ is a strict independence relation for $T_\cR$ is the same as Theorem \ref{thm:inftySIR} for $T_\cN$, except for the following issues.

To show \emph{symmetry}, we need the following properties of $\cR^*$, which follow from \cite[Proposition 2.11]{CoDM2}. First, if $a,b\in\U_\cR$ and $C\subset\U_\cR$ then $d(b,C)\leq d(a,b)\p d(a,C)$. Second, if $r,s\in R^*$ then $r\p s=\infty$ if and only if $r=\infty$ or $s=\infty$. Given this, the proof of symmetry follows as in Theorem \ref{thm:inftySIR}.  

The argument for \emph{local character} requires slightly more work. Fix $A,B\subset\U_\cR$. Given $a\in A$, define $X_a\seq R^*$ such that $r\in X_a$ if and only if there is $b\in B$ with $d(a,b)=r$. For each $a\in A$ and $r\in X_a$, fix some $b^a_r\in B$ such that $d(a,b^a_r)=r$. Now let $C=\{b^a_r:a\in A,~r\in X_a\}$. Then $|C|\leq |A|+|R^*|$ and $d(a,B)=d(a,C)$ for any $a\in A$, which implies $A\ind^\infty_C B$. 

To show \emph{extension}, we must define the analog of $d_{\max}$. In particular, given $C\subset\U_\cR$ and $a,b\in\U_\cR$, set $d_{\max}(a,b/C):=\inf_{c\in C}(d(a,c)\p d(c,b))$. This definition is justified by the fact that the ordering in $\cR^*$ is complete (again, see \cite{CoDM2}). The proof of extension now follows as in Theorem \ref{thm:inftySIR}.
\end{proof}

Applying Theorem \ref{thm:transfer}, we have:

\begin{corollary}
If $\cR$ is a suitable distance monoid, then $T^{\eq}_\cR$ has more than one strict independence relation.
\end{corollary}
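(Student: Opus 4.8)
The plan is to deduce this corollary formally from the two preceding results, in exact parallel with the proof of Corollary \ref{cor:TN}; the substantive work has already been carried out, and what remains is a short argument using the bijectivity of the transfer map. Since $\cR$ is suitable, $T_\cR$ has weak elimination of imaginaries by definition, so Theorem \ref{thm:transfer} applies: the map $\ind\mapsto\ind^{\eq}$ is a bijection from the strict independence relations for $T_\cR$ onto those for $T^{\eq}_\cR$, and in particular it is injective.

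Next I would invoke the preceding theorem, which for suitable $\cR$ exhibits $\ind^\infty$ as a strict independence relation for $T_\cR$ distinct from $\ind^{\thrn}$. Since $\ind^{\thrn}$ (coinciding with $\ind^a$ in $\U_\cR$) is itself a strict independence relation for $T_\cR$, we have two distinct strict independence relations on the real sort. Feeding them through the injective transfer map yields two distinct strict independence relations $(\ind^\infty)^{\eq}$ and $(\ind^{\thrn})^{\eq}$ for $T^{\eq}_\cR$, which is precisely the claim.

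I expect no genuine obstacle, since all the difficulty has been absorbed upstream: verifying the nine axioms for $\ind^\infty$ and separating it from $\ind^a=\ind^{\thrn}$ (via a pair $a,b$ at finite distance over a base $C$ with $d(a,C)=d(b,C)=\infty$) occur in the preceding theorem, while the bijectivity lives in Theorem \ref{thm:transfer}. The only point to confirm is that distinctness survives the transfer map, and this is immediate from injectivity---itself a consequence of the observation that every $A\subset\U_\cR$ is a weak code for itself.
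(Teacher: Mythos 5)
Your proposal is correct and follows exactly the route the paper intends: the paper's own justification is just ``Applying Theorem \ref{thm:transfer}'' on top of the preceding theorem, in parallel with the proof of Corollary \ref{cor:TN}, which is precisely your argument (weak elimination of imaginaries from suitability, injectivity of $\ind\mapsto\ind^{\eq}$, and the two distinct relations $\ind^\infty$ and $\ind^{\thrn}=\ind^a$ on the real sort). No gaps; the one point you assert without proof --- that $\ind^a=\ind^{\thrn}$ is itself a strict independence relation for $T_\cR$ --- is exactly what Fact \ref{fact:alg} gives once that coincidence is established in the preceding theorem's proof.
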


To finish our goal, we quote  \cite{CoDM2} to show that if $\cR$ is suitable then $T_\cR$ is not simple. In fact, $T_\cR$ is quite complicated in the sense of certain combinatorial dividing lines invented by Shelah (see \cite{CoDM2} or \cite{Sh500} for definitions).

\begin{fact}\label{fact:SOPn}\textnormal{\cite{CoDM2}}
If $\cR$ is a suitable distance monoid then $T_\cR$ has $\textnormal{SOP}_n$ for all $n\geq 3$ (but does not have the strict order property). Therefore $T_\cR$ is not simple.
\end{fact}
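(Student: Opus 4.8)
The statement bundles two claims about $T_\cR$ — that it has $\textnormal{SOP}_n$ for every $n\geq 3$, and that it lacks the strict order property — from which non-simplicity follows at once, since a theory with $\textnormal{SOP}_3$ has the tree property (Shelah) and simple theories do not. So the plan is to establish the two order-properties directly in $\U_\cR$, using throughout that, since $\cR$ is suitable, $T_\cR$ has quantifier elimination; thus every relevant formula is a finite Boolean combination of the atomic conditions $d(x,y)\leq r$.

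For the positive direction I would fix $n\geq 3$ and exhibit a formula $\varphi_n(\bar x,\bar y)$, with $\bar x=(x_0,x_1)$ read as a ``directed edge,'' together with an infinite $\varphi_n$-chain and a proof that no directed $\varphi_n$-cycle of length $n$ exists. The mechanism is metric betweenness: $\varphi_n$ asserts that $x_0,x_1,y_0,y_1$ lie, in this order, along a geodesic, with the gap between $x_1$ and $y_0$ bounded below by a fixed distance $g$ and the edges bounded above, all cut out by finitely many fixed thresholds (available because $\cR$ has no maximal element, so arbitrarily large elements of $\cR$, and hence of $\cR^*$, occur). An infinite chain is produced by realizing in $\U_\cR$ a single geodesic ray carrying disjoint, increasingly separated edges: every finite piece of this configuration is a finite $\cR$-metric space, hence embeds by universality of $\U_\cR$, and it satisfies $\varphi_n(\bar a_i,\bar a_j)$ for all $i<j$. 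The no-cycle clause is where the triangle inequality does the work: traversing a putative directed $n$-cycle once, one bounds a mandated large gap $g$ by a sum of too few edge-lengths accrued around the loop, contradicting the polygon inequality for the chosen $g$. I expect the delicate point to be calibrating the thresholds so that the no-$n$-cycle clause is genuinely first-order (finitely many $r$'s) while persisting for every $n$; this is exactly where the fine combinatorics of $\cR^*$ from \cite{CoDM2} enters, and where unboundedness of $\cR$ is essential (a bounded distance set, e.g. $\{0,1,2\}$, collapses to the simple theory of the random graph).

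For the negative direction, suppose toward a contradiction that $T_\cR$ has the strict order property, witnessed by a formula $\psi(\bar x,\bar y)$ defining a partial order with an infinite chain $(\bar a_i)_{i<\omega}$, which by Ramsey we may take to be indiscernible. By quantifier elimination $\psi$ refers to only finitely many distance thresholds. For singletons the argument is immediate: the only quantifier-free information about two points is $d(x,y)$, and since $d(x,y)=d(y,x)$ the relation defined by $\psi$ is symmetric, hence cannot be an antisymmetric order with distinct elements. In general the plan is to exploit the same symmetry at the level of tuples: because the metric is symmetric and $\psi$ sees distances only up to finite granularity, reversing a sufficiently long segment of the indiscernible chain should be, up to this granularity, a partial isometry, which by homogeneity extends to an automorphism carrying $\psi(\bar a_i,\bar a_j)$ to $\psi(\bar a_j,\bar a_i)$ and contradicting antisymmetry. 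Turning this reversal into an honest partial isometry — that is, ruling out definable transitivity together with antisymmetry and an infinite chain given only finitely many thresholds — is the main obstacle, and is the technical heart of \cite{CoDM2}. This does not conflict with the positive direction: each $\varphi_n$ is acyclic only at level $n$, and the transitive closures of the $\varphi_n$ are not definable, which is precisely the gap between ``$\textnormal{SOP}_n$ for all $n$'' and the full strict order property.

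Finally, with $\textnormal{SOP}_3$ in hand, non-simplicity of $T_\cR$ is immediate. (Non-simplicity alone could instead be obtained by adapting the argument of \cite{CaWa}, as the text notes, but the sharper placement in Shelah's hierarchy requires the construction above.)
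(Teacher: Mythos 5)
Your proposal and the paper's proof part ways at the crucial point: nowhere do you invoke weak elimination of imaginaries, even though it is part of the definition of \emph{suitable} and is exactly the hypothesis on which the paper's argument turns. The paper does not rebuild the $\textnormal{SOP}_n$ witness at all: it quotes from \cite{CoDM2} the statement that $\textnormal{SOP}_n$ follows from the purely monoid-theoretic inequality $(n-1)r<nr$ for $r\in R^{>0}$ (the ``technical construction'' of \cite[Section 6]{CoDM2}), and then verifies this inequality using weak elimination of imaginaries: if $(n-1)r=nr$, then $mr=(n-1)r$ for all $m\geq n-1$, so $d(x,y)\leq(n-1)r$ is a nontrivial definable equivalence relation on $\U_\cR$, which \cite[Section 7]{CoDM2} shows contradicts weak elimination of imaginaries. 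Your sketch instead attributes the needed calibration of thresholds to unboundedness of $\cR$ (``where unboundedness of $\cR$ is essential''), and the hypotheses you actually use are countability, quantifier elimination, and no maximal element. Those hypotheses are insufficient, so the construction you outline cannot work as stated.

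Concretely: let $R=\{0\}\cup\{r_n:n\in\N\}$ be any countable unbounded chain and take the monoid operation to be $\max$. Then $(R,\max,\leq,0)$ is a countable distance monoid with no maximal element, its $\cR$-metric spaces are exactly ultrametric spaces with distances in $R$, and $T_\cR$ is the model completion of a chain of refining equivalence relations $d(x,y)\leq r_n$ with infinite splitting; this theory has quantifier elimination and is \emph{stable}, so it has no $\textnormal{SOP}_3$ witness whatsoever. Your polygon-inequality mechanism visibly collapses here: sums of edge lengths around a loop never exceed the largest single edge, so no choice of finitely many thresholds makes a long path force a large distance. What excludes this degenerate behavior is precisely weak elimination of imaginaries (the relations $d(x,y)\leq r_n$ are nontrivial definable equivalence relations, so this $\cR$ is not suitable) --- that is, the step your proposal is missing, and the step that in the paper converts the model-theoretic hypothesis into the arithmetic condition $(n-1)r<nr$ that the cited construction needs. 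The non-SOP half is simply quoted from \cite{CoDM2} in the paper as well, so deferring it is reasonable; but your indiscernible-reversal heuristic for it suffers from the same defect, since it makes no use of the hypothesis that actually separates the suitable case from examples like the one above.
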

\begin{proof}
We fix $n\geq 3$ and explain how to obtain $\text{SOP}_n$ from results in \cite{CoDM2}. Fix $r\in R^{>0}$. We show $(n-1)r<nr$, which implies $\text{SOP}_n$ by a technical construction in \cite[Section 6]{CoDM2}. If $(n-1)r=nr$ then $(n-1)r=mr$ for all $m\geq n-1$, and so $d(x,y)\leq (n-1)r$ is a nontrivial equivalence relation on $\U_\cR$. This is shown in \cite[Section 7]{CoDM2} to contradict weak elimination of imaginaries.\end{proof}

A final observation is that this is not the first time nonstandard distances in saturated models of metric spaces have led to interesting model theoretic phenomena. In particular, consider the \emph{rational Urysohn space} $\cU_\cQ$. In the monster model $\U_\cQ$, we have the type-definable binary relations $d_{0^+}(x,y):=\bigwedge_{r\in \Q^+}d_r(x,y)$ and $d_\infty(x,y):=\bigwedge_{r\in\Q^+}\neg d_r(x,y)$ describing, respectively, infinitesimal distance and infinite distance. Note that $d_{0^+}(x,y)$ and the complement of $d_\infty(x,y)$ are both equivalence relations on $\U_\cQ$. The work in this section uses $d_\infty(x,y)$ to obtain a strict independence relation for $T^{\eq}_\cQ$ distinct from thorn-forking. In \cite{CaWa}, Casanovas and Wagner used $d_{0^+}(x,y)$ to obtain the first example of non-eliminable (finitary) hyperimaginaries in a theory without the strict order property (to be precise, they considered the rational Urysohn \emph{sphere}; see \cite[Section 7]{CoDM2} for details). In both of these situations, the aberrant behavior can be traced to a failure of $\aleph_0$-categoricity. Specifically, as types in $S_2(T_\cQ)$, $d_{0^+}(x,y)$ and $d_\infty(x,y)$ are both non-isolated. Moreover, it is a fact that finitary hyperimaginaries are always eliminated in $\aleph_0$-categorical theories \cite[Theorem 18.14]{Cabook}.  So we ask the following question.

\begin{question}
Suppose $T$ is a complete $\aleph_0$-categorical theory. Can $T^{\eq}$ have more than one strict independence relation?
\end{question}

\subsection*{Acknowledgements}
I would like to thank the anonymous referee for several corrections and suggestions, which significantly improved the final draft.

%\bibliography{/Users/rickysellers/Desktop/Gabe/Math/BibTeX/biblio}
\bibliography{/Users/gabrielconant/Desktop/Math/BibTex/biblio}
\bibliographystyle{amsplain}
\end{document}